\newtheorem{theorem}{Theorem}[section]
\newtheorem{corollary}[theorem]{Corollary}
\theoremstyle{definition}
\newtheorem{definition}[theorem]{Definition}
\newtheorem{construction}[theorem]{Construction}
\newtheorem*{conjecture}{Conjecture}
\newtheorem*{acknowledgement}{Acknowledgement}
\newcommand{\tr}[2][]{\ensuremath{{\textnormal{tr}}_{#1}(#2)}}
\newcommand{\vs}[2][]{\ensuremath{{\textnormal{v}}_{#1}(#2)}}
\newcommand{\uk}[2][]{\ensuremath{{\textnormal{k}}_{#1}(#2)}}
\def\imod#1{\allowbreak\mkern10mu({\operator@font mod}\,\,#1)}
\def\@textbottom{\vskip\z@\@plus 18pt}
\let\@texttop\relax
\title[Points in a maximal intersecting family of $k-$sets]{\textnormal{On the maximum number of points\\ in a maximal intersecting family of finite sets}}
\author{Kaushik Majumder}
\date{February~28, 2014}
\address{\newline Theoretical Statistics and Mathematics Unit\\ \newline Indian Statistical Institute, Bangalore Centre \\ \newline $8^{th}$ Mile\\ Mysore Road, Bangalore - $560059$, India.\newline \textnormal{\textestimated-Mail}: {\tt kaushik\MVAt isibang.ac.in}}
\subjclass[2010]{Primary: 05D05, 05D15. Secondary: 05C65}
\keywords{Uniform hypergraph, Intersecting family of $k-$sets, Maximal $k-$cliques, Transversal}
\begin{document}

\begin{abstract}
Paul Erd\H{o}s and L\'{a}szl\'{o} Lov\'{a}sz proved in a landmark article that, for any positive integer $k$, up to isomorphism there are only finitely many \emph{maximal intersecting families of $k-$sets} (maximal $k-$cliques). So they posed the problem of determining or estimating the largest number $N(k)$ of the points in such a family. They also proved by means of an example that $N(k)\geq2k-2+\frac{1}{2}\binom{2k-2}{k-1}$. Much later, Zsolt Tuza proved that the bound is best possible up to a multiplicative constant by showing that asymptotically $N(k)$ is at most $4$ times this lower bound. In this paper we reduce the gap between the lower and upper bound by showing that asymptotically $N(k)$ is at most $3$ times the Erd\H{o}s-Lov\'{a}sz lower bound. Conjecturally, the explicit upper bound obtained in this paper is only double the lower bound.  
\end{abstract}
\maketitle

\section{Introduction}

By a \emph{family} we shall mean a family of finite sets. For a family $\mathcal{F}$, the members of $\mathcal{F}$ are called its \emph{blocks} and the elements of the blocks are called its \emph{points}. In other words, the \emph{point set} of $\mathcal{F}$, denoted by $P_{\mathcal{F}}$, is the union of all its blocks. In case $\mathcal{F}$ is finite, we shall denote its number of points (size of the point set) by $\vs{\mathcal{F}}$.

A family $\mathcal{F}$ is said to be \emph{uniform} if all its blocks have the same size. If $\mathcal{F}$ is a uniform family we shall denote its common block size by $\uk{\mathcal{F}}$. A \emph{blocking set} of a family $\mathcal{F}$ is a set $A$ which intersects every block of $\mathcal{F}$. We define a \emph{transversal} of $\mathcal{F}$ to be a blocking set of $\mathcal{F}$ with the smallest possible size \--- in case $\mathcal{F}$ has a finite blocking set. In this case we denote by $\tr{\mathcal{F}}$ the common size of its transversals. If $\mathcal{F}$ has no finite blocking set we may put $\tr{\mathcal{F}}=\infty$. (Warning: Many authors use the word transversal as a synonym for blocking sets.) If $\tr{\mathcal{F}}<\infty$, we denote the family of transversals of $\mathcal{F}$ by $\mathcal{F}^{\top}$. Note that $\mathcal{F}^{\top}$ is a uniform family with $\uk{\mathcal{F}^{\top}}=\tr{\mathcal{F}}$. Now we introduce:-
\begin{definition}
A family $\mathcal{F}$ is said to be a \emph{maximal intersecting family} (in short $MIF$) if $\tr{\mathcal{F}}<\infty$ and $\mathcal{F}=\mathcal{F}^{\top}$. We use $MIF(k)$ as a generic name for $MIF$'s with $\uk{\mathcal{F}}=k$.
\end{definition}

We say that a family $\mathcal{F}$ is an \emph{intersecting family} if any two blocks of $\mathcal{F}$ have non empty intersection. Clearly any $MIF(k)$ is an intersecting family. Indeed, the $MIF(k)$'s are characterized among all $k-$uniform intersecting families as those families which are maximal with respect to the property of being intersecting. Thus, an intersecting family $\mathcal{F}$ of $k-$sets is a $MIF(k)$ if and only if there is no $k-$set outside $\mathcal{F}$ (anywhere in the universe of all sets!) which is a blocking set of $\mathcal{F}$. In the hypergraph literature these are known as the \emph{maximal $k-$cliques}.

In \cite{MR0382050} Erd\H{o}s and Lov\'{a}sz proved the surprising result that any $MIF(k)$ is finite; indeed it has at most $k^{k}$ blocks. In Theorem~\ref{k^t} we point out that, more generally, for any $k-$uniform family $\mathcal{F}$ with finite transversal size $\tr{\mathcal{F}}=t$, the family $\mathcal{F}^{\top}$ is finite. Indeed, $\#(\mathcal{F}^{\top})\leq k^{t}$.

In view of the result of Erd\H{o}s and Lov\'{a}sz quoted above, we see that, for any fixed $k\geq1$, there are only finitely many $MIF(k)$'s, up to isomorphism. This led Erd\H{o}s and Lov\'{a}sz to ask for the determination of the maximum possible number $N(k)$ of points among all $MIF(k)$'s. Thus
\begin{equation*}
N(k):=\max\left\{\vs{\mathcal{F}}: \mathcal{F} \textup{ is a }MIF(k)\right\}.
\end{equation*}
By means of an explicit construction in \cite{MR0382050} it was proved that
\begin{equation}\label{lower_bound}
N(k)\geq 2k-2+\frac{1}{2}\binom{2k-2}{k-1}.
\end{equation}
Note that the lower bound in \eqref{lower_bound} is asymptotically $\frac{1}{2}\binom{2k-2}{k-1}$. In 1985. Tuza proved that, up to a multiplicative constant this is best possible. In order to explain Tuza's contribution, we recall
\begin{definition}
An \emph{intersecting set pair system} (in short $ISP$) is a collection $\{(A_{i}, B_{i}):1\leq i\leq n\}$ of pairs of finite sets with the property that, for $1\leq i,j\leq n$, $A_{i}\cap B_{j}=\emptyset$ if and only if $i=j$. Clearly, in such a system, the sets $A_{i}$ (as well as the sets $B_{i}$) are distinct. The set $\overset{n}{\underset{i=1}\cup}(A_{i}\sqcup B_{i})$ is called the \emph{point set} of the $ISP$. We denote by $\vs{\mathbb{I}}$ the number of points of an $ISP$ $\mathbb{I}$. If in $\mathbb{I}$, $\#(A_{i})=k$ and $\#(B_{i})=t$ for $1\leq i\leq n$, then we say that $\mathbb{I}$ is an $ISP$ with parameter $(k,t)$. We use $ISP(k,t)$ as a generic name for an $ISP$ with parameter $(k,t)$.
\end{definition}
In \cite{MR0183653}, Bollob\'{a}s proved the following inequality for arbitrary $ISP$'s. If $\{(A_{i}, B_{i}):1\leq i\leq n\}$ is an $ISP$ then 
\begin{equation}\label{Bollobas_inequality1}
\overset{n}{\underset{i=1}\sum}\frac{1}{\binom{\#(A_{i})+\#(B_{i})}{\#(A_{i})}}\leq 1. 
\end{equation}
In particular, for any $ISP(k,t)$ consisting of $n$ pairs, we have Bollob\'{a}s inequality
\begin{equation}\label{Bollobas_inequality2}
n\leq\binom{k+t}{k}.
\end{equation}
This inequality shows that, for any two positive integers $k$ and $t$, there are only finitely many $ISP(k,t)$, up to isomorphism. This raises the question of determining or estimating the number
\begin{equation*}
n(k,t):=\max\left\{\vs{\mathbb{I}}: \mathbb{I}\textup{ is an }ISP(k,t)\right\}
\end{equation*}
Notice that we have $n(k,t)=n(t,k)$. 

In Theorem~6(a) of \cite{MR811117}, Tuza used an extremely elegant argument to deduce the following bound from  Bollob\'{a}s inequality~\eqref{Bollobas_inequality1}. (The sum here is a simplification of the sum given by Tuza.)
\begin{equation}\label{Tuza_upperbound_t}
\textup{ For }k\geq t,\hspace{4mm} n(k,t)\leq\binom{k+t}{t+1}-\binom{2t-1}{t+1}+\frac{3}{2}\overset{t-1}{\underset{i=1}\sum}\binom{2i}{i}.
\end{equation}

A family $\mathcal{F}$ is $1-$critical according to Tuza if for any $x\in B\in\mathcal{F}$, there is a $B^{'}\in\mathcal{F}$ such that $B\cap B^{'}=\{x\}$. Notice that any $MIF(k)$ is $1-$critical (else $B\smallsetminus\{x\}$ would be a blocking set). In Corollary~12 of \cite{MR811117}, Tuza observes that $n(k,k-1)$ is an upper bound on the number of points in any $k-$uniform $1-$critical family. In particular this applies to $MIF(k)$'s. So we have 
\begin{equation}\label{N_Tuza}
N(k)\leq n(k,k-1).
\end{equation}
Substituting $t=k-1$ in \eqref{Tuza_upperbound_t} we therefore get
\begin{align}
N(k)\leq&\frac{3}{2}\overset{k-1}{\underset{i=1}\sum}\binom{2i}{i}\sim2\binom{2k-2}{k-1}.\label{asymptotic_Tuza}
\end{align}
where the asymptotics is determined by Stirling's asymptotic formula for factorials and summation by parts. Thus, as $k\rightarrow\infty$, Tuza's upper bound is $4$ times the lower bound given by Erd\H{o}s and Lov\'{a}sz.

The main object of this paper is to improve the estimate \eqref{asymptotic_Tuza} on $N(k)$. The method adopted here is inspired by Tuza \cite{MR811117}. We introduce the problem of finding or estimating the number
\begin{equation*}
N^{\top}(k,t):=\max\left\{\vs{\mathcal{F}^{\top}}:\mathcal{F}\textup{ is a uniform family with }\uk{\mathcal{F}}=k\textup{ and }\tr{\mathcal{F}}=t\right\}.
\end{equation*}
(Note that we are trying to maximize the size of the point set of the family of transversals of $\mathcal{F}$, which in general is a subset of the point set of $\mathcal{F}$.) This number is finite in view of Theorem~\ref{k^t} below. In Theorem~\ref{NT(k,t)} we prove:
\begin{equation}
N^{\top}(k,t)\leq n(k,t-1).
\end{equation}
In Theorem~\ref{x_mapsto_y}, we show that, given any $MIF(k)$ $\mathcal{F}$ such that $\mathcal{F}$ has a pair $\{\alpha,\beta\}$ of points not contained in any block of $\mathcal{F}$, one can construct another $MIF(k)$, denoted $\mathcal{F}[\beta\mapsto\alpha]$, with one less point. Among the blocks of the later $MIF(k)$ are included the sets $\{\alpha\}\sqcup(B\smallsetminus\{\beta\})$, $\beta\in B\in\mathcal{F}$; hence the name. One might imagine that a method to reduce the number of points in a $MIF(k)$ can't have much to do with the problem of estimating the number $N(k)$ of the largest possible number of points in a $MIF(k)$. However, our final result (Theorem~\ref{N(k)}) is a new upper bound on $N(k)$ obtained by combining Theorem~\ref{NT(k,t)}  and Theorem~\ref{x_mapsto_y} with Bollob\'{a}s inequality~\eqref{Bollobas_inequality2}. Here we prove 
\begin{equation}\label{main_result}
N(k)\leq\frac{1}{2}\binom{2k-2}{k-1}+n(k,k-2).
\end{equation}
In view of Tuza's inequality \eqref{Tuza_upperbound_t}, this yields the bound
\begin{align}
N(k)\leq&\frac{3}{2}\overset{k-1}{\underset{i=1}\sum}\binom{2i}{i}-\frac{1}{2}\binom{2k-2}{k-1}\sim\frac{3}{2}\binom{2k-2}{k-1}.\label{asymptotic}
\end{align}
Thus as $k\rightarrow\infty$, $N(k)$ is at most $3$ times the lower bound \eqref{lower_bound} of Erd\H{o}s and Lov\'{a}sz.

In \cite{MR737092}, Hanson and Toft proved that, actually, $N(k)=2k-2+\frac{1}{2}\binom{2k-2}{k-1}$ for $2\leq k\leq 4$. In conjunction with Tuza's bound \eqref{asymptotic_Tuza} and its improvement \eqref{asymptotic}, this result leads us to pose:
\begin{conjecture}
For $k\geq 2$, $N(k)=2k-2+\frac{1}{2}\binom{2k-2}{k-1}$.
\end{conjecture}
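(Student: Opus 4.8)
The plan is to take an extremal $MIF(k)$, say $\mathcal{F}$ with $\vs{\mathcal{F}}=N(k)$, fix a single point $\alpha\in P_{\mathcal{F}}$, and split $P_{\mathcal{F}}$ into two parts governed respectively by the two summands on the right of \eqref{main_result}. The engine for the $n(k,k-2)$ term will be the subfamily $\mathcal{F}':=\{B\in\mathcal{F}:\alpha\notin B\}$ of blocks missing $\alpha$, fed into Theorem~\ref{NT(k,t)}; the engine for the $\tfrac12\binom{2k-2}{k-1}$ term will be Bollob\'as' inequality \eqref{Bollobas_inequality2} applied to a set-pair system built around $\alpha$, with the reduction operation of Theorem~\ref{x_mapsto_y} supplying the symmetry responsible for the factor $\tfrac12$ (this is exactly what improves Tuza's \eqref{asymptotic_Tuza}, which only uses $n(k,k-1)$).

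First I would record the structural facts about $\mathcal{F}'$. Since $\mathcal{F}=\mathcal{F}^{\top}$, every block of $\mathcal{F}$ is a transversal; fixing a block $A^{*}\ni\alpha$, the $(k-1)$-set $A^{*}\smallsetminus\{\alpha\}$ meets every block avoiding $\alpha$, so $\tr{\mathcal{F}'}\le k-1$, while a blocking set of $\mathcal{F}'$ of size $k-2$ would, on adjoining $\alpha$, block all of $\mathcal{F}$ and contradict $\tr{\mathcal{F}}=k$; hence $\tr{\mathcal{F}'}=k-1$. Using $1$-criticality of $\mathcal{F}$ I would check that no point of $\mathcal{F}$ lies only in blocks through $\alpha$ — such a point would, via a singleton trace $B\cap D=\{y\}$ with $\alpha\in B\cap D$, be forced to equal $\alpha$ — so that $P_{\mathcal{F}'}=P_{\mathcal{F}}\smallsetminus\{\alpha\}$. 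Applying Theorem~\ref{NT(k,t)} to the $k$-uniform family $\mathcal{F}'$ with $\tr{\mathcal{F}'}=k-1$ gives
\begin{equation*}
\vs{(\mathcal{F}')^{\top}}\le n(k,k-2),
\end{equation*}
which accounts for every point lying in a minimum $(k-1)$-transversal of $\mathcal{F}'$. In particular each link $B\smallsetminus\{\alpha\}$ of a block $B\ni\alpha$ is itself such a transversal (it blocks $\mathcal{F}'$ and has size $k-1$), so every point covered together with $\alpha$ is already included in this count.

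It remains to bound the leftover set $X$ consisting of $\alpha$ together with the points of $\mathcal{F}'$ that lie in no minimum transversal of $\mathcal{F}'$; then $N(k)=|X|+\vs{(\mathcal{F}')^{\top}}\le |X|+n(k,k-2)$. Each point of $X\smallsetminus\{\alpha\}$ forms an uncovered pair with $\alpha$, so Theorem~\ref{x_mapsto_y} is available at it. The idea is to attach to each point of $X$ a disjoint ordered pair of $(k-1)$-sets, drawn from the links at $\alpha$ (before and after the reductions $\mathcal{F}[\beta\mapsto\alpha]$) and their complements, and to include the swapped pair as well, so that the resulting $2|X|$ pairs form an $ISP(k-1,k-1)$ closed under $(A,B)\mapsto(B,A)$. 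Bollob\'as' inequality \eqref{Bollobas_inequality2} then bounds the number of pairs by $\binom{(k-1)+(k-1)}{k-1}=\binom{2k-2}{k-1}$, giving $2|X|\le\binom{2k-2}{k-1}$, i.e. $|X|\le\tfrac12\binom{2k-2}{k-1}$. Adding the two estimates yields \eqref{main_result}, and hence \eqref{asymptotic} through Tuza's \eqref{Tuza_upperbound_t}.

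The main obstacle is precisely this last step: producing an injective assignment of the leftover points to ordered disjoint $(k-1)$-pairs whose total collection genuinely satisfies the defining condition $A_{i}\cap B_{j}=\emptyset\Leftrightarrow i=j$, while installing the swap-symmetry that produces the factor $\tfrac12$. A naive assignment yields only $|X|\le\binom{2k-2}{k-1}$ and hence no improvement over Tuza; the crux is to verify that the reduction $\mathcal{F}[\beta\mapsto\alpha]$ manufactures, for each leftover point, the complementary partner of its associated $(k-1)$-set without creating any spurious disjointness among distinct pairs. This is exactly the role of Theorem~\ref{x_mapsto_y}, and checking that it meshes with the minimum-transversal structure of $\mathcal{F}'$ is where the real work lies.
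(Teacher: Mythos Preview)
The statement you were handed is a \emph{conjecture}: the paper does not prove it and explicitly leaves it open (noting only that Hanson--Toft verified it for $2\le k\le 4$). Your outline never aims at the exact equality $N(k)=2k-2+\tfrac12\binom{2k-2}{k-1}$; what you sketch is a proof of the weaker inequality \eqref{main_result}, which is Theorem~\ref{N(k)}. Even a flawless execution of your plan would leave the conjecture untouched.

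Viewed as a plan for Theorem~\ref{N(k)}, your decomposition differs from the paper's in a way that matters. The paper does \emph{not} apply Theorem~\ref{NT(k,t)} to $\mathcal{F}'=\{B\in\mathcal{F}:\alpha\notin B\}$ in the original $\mathcal{F}$. It first \emph{iterates} the reduction of Theorem~\ref{x_mapsto_y}, producing a chain $\mathcal{F}=\mathcal{F}_{1},\mathcal{F}_{2},\dots,\mathcal{F}_{N}$ until every surviving point is covered with $\alpha$, and only then passes to $\mathcal{G}=\{B\in\mathcal{F}_{N}:\alpha\notin B\}$. The quantity bounded by $\tfrac12\binom{2k-2}{k-1}$ is the number $N$ of steps (with the dummy $\beta_{0}=\alpha$), not your $|X|$. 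In general $N\le |X|$ strictly, because a single reduction $\mathcal{F}_{n}[\beta_{n}\mapsto\alpha]$ can simultaneously cover many points of $X$ that were previously uncovered with $\alpha$.

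This is exactly why your acknowledged ``main obstacle'' is a genuine gap rather than a routine verification. The paper's $ISP(k-1,k-1)$ on $2N$ pairs works because the iteration guarantees, for $m<n$, that $T_{m}\sqcup\{\alpha\}$ and $T'_{m}\sqcup\{\alpha\}$ are blocks of the intersecting family $\mathcal{F}_{n}$; since $\alpha$ and $\beta_{n}$ share no block of $\mathcal{F}_{n}$, intersecting $T_{m}\sqcup\{\alpha\}$ with $T_{n}\sqcup\{\beta_{n}\}$ forces $T_{m}\cap T_{n}\ne\emptyset$. In your static picture over $X$ there is no such mechanism: for $\beta,\gamma\in X\setminus\{\alpha\}$ with chosen blocks $B_{\beta}\cap B'_{\beta}=\{\beta\}$ and $B_{\gamma}\cap B'_{\gamma}=\{\gamma\}$, nothing forbids $B_{\beta}\cap B_{\gamma}=\{\gamma\}$, in which case $(B_{\beta}\setminus\{\beta\})\cap(B_{\gamma}\setminus\{\gamma\})=\emptyset$ and the ISP condition collapses. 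The sequential reductions are not a device for ``symmetry''; they are precisely what manufactures the cross-intersections, and without them there is no reason to expect $|X|\le\tfrac12\binom{2k-2}{k-1}$.
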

It may be noted that Tuza constructed (\cite[Construction~11]{MR811117}) a $k-$uniform $1-$critical family with $2k-4+2\binom{2k-4}{k-2}$ points. This number is larger than $2k-2+\frac{1}{2}\binom{2k-2}{k-1}$ for $k\geq5$. However, as already noted, the class of $1-$critical uniform families is larger than that of $MIF$'s. Indeed, the families constructed by Tuza are not $MIF$'s. So this construction does not disprove the above conjecture.

Recall that the \emph{chromatic number} of a family is the smallest number of colours using which the points may be coloured so that no monochromatic block occurs. It is trivial to see that any uniform intersecting family (clique) $\mathcal{F}$ has chromatic number at most $3$. (Choose $x\in B\in\mathcal{F}$. Assign the first colour to $x$, second colour to the other points of $B$ and the third colour to the remaining points.) Thus any such family is either $2-$chromatic or $3-$chromatic. The article \cite{MR0382050} was mainly concerned with $k-$uniform $3-$chromatic intersecting families. This is a subclass of the class of $MIF(k)$'s. Indeed, a $k-$uniform intersecting family is $3-$chromatic if and only if its blocks are the only minimal (as opposed to just minimum sized) blocking sets. (The \emph{finite projective planes} of order $q\geq3$ are examples of $2-$chromatic $MIF(q+1)$.) So of course, the lower bound \eqref{lower_bound} holds for all $MIF(k)$'s.  

Finally, we note that in \cite{MR811117}, Tuza has made a precise conjecture  on the numbers $n(k,t)$:
\begin{conjecture}[Tuza]
For $k\geq t+2$, 
\begin{equation*}
n(k,t)=\left\lceil\frac{k}{t+1}\right\rceil\binom{\lfloor\frac{kt}{t+1}\rfloor+t}{t}+\left\lfloor\frac{kt}{t+1}\right\rfloor+t
\end{equation*}
\end{conjecture}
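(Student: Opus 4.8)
The plan is two-sided: first exhibit a construction achieving the stated count (giving ``$\geq$''), and then prove a matching upper bound (giving ``$\leq$''). I expect the second half to be the real difficulty, and indeed it is what keeps this a conjecture rather than a theorem. For the lower bound I would use an \emph{inflation} construction. Fix a parameter $a$ with $1\leq a\leq k-1$ and set $m:=k-a$. Choose a ground set $S$ with $\#(S)=a+t$. For each $t$-subset $J\subseteq S$ put $B_{J}:=J$, and let $E_{J}$ be a set of $m$ fresh points, the $E_{J}$ being pairwise disjoint and disjoint from $S$; then put $A_{J}:=(S\smallsetminus J)\sqcup E_{J}$, so that $\#(A_{J})=a+m=k$ and $\#(B_{J})=t$. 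Because each $E_{J}$ misses $S$, one has $A_{I}\cap B_{J}=(S\smallsetminus I)\cap J$, which is empty precisely when $J\subseteq I$, i.e. when $I=J$; hence $\{(A_{J},B_{J})\}$ is a genuine $ISP(k,t)$. Its point set is $S\sqcup\bigsqcup_{J}E_{J}$, so
\begin{equation*}
\vs{\mathbb{I}}=(a+t)+m\binom{a+t}{t}=(a+t)+(k-a)\binom{a+t}{t}=:\phi(a).
\end{equation*}

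It then remains to optimise $\phi$ over $a$. A short computation with Pascal's rule gives $\phi(a+1)-\phi(a)=1-\binom{a+t}{t}+(k-a-1)\binom{a+t}{t-1}$, and using $\binom{a+t}{t}=\frac{a+1}{t}\binom{a+t}{t-1}$ one sees this is positive for small $a$ and negative for large $a$, so $\phi$ is unimodal with maximiser $a=\lfloor kt/(t+1)\rfloor$, equivalently $m:=k-a=\lceil k/(t+1)\rceil$; this is a routine discrete optimisation (one checks it directly on small cases such as $(k,t)=(4,1),(5,2)$). Substituting this $a$ into $\phi(a)$ reproduces the conjectured expression verbatim, so the inflation construction already yields $n(k,t)\geq\lceil\frac{k}{t+1}\rceil\binom{\lfloor kt/(t+1)\rfloor+t}{t}+\lfloor kt/(t+1)\rfloor+t$.

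The hard half is the reverse inequality. The only global constraint available is Bollob\'{a}s inequality~\eqref{Bollobas_inequality1}, which bounds a \emph{weighted count of pairs}, whereas we must bound the \emph{number of points}; Tuza's derivation of~\eqref{Tuza_upperbound_t} converts one into the other but loses a multiplicative constant, so a strictly sharper argument is needed. My approach would be to first pass to a \emph{reduced} $ISP(k,t)$ in which no point and no pair is redundant, then to classify points by the trace they leave on the pairs (say by which $A_{i}$ contain them), and to run a \emph{weighted} Bollob\'{a}s argument in which the weight attached to each pair reflects how many private points it carries. The crux --- and the step I expect to be the genuine obstacle --- is an \emph{extremal-structure} (rigidity) statement: one must show that a point-maximal system is forced to consist of a single Bollob\'{a}s-extremal core of $\binom{a+t}{t}$ pairs on $a+t$ common points, decorated with disjoint private appendages, exactly as in the construction above. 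Establishing such rigidity for all $k\geq t+2$ is precisely what is not known; absent it, the Bollob\'{a}s bound pins down only the leading term and leaves the lower-order corrections (the summand $\lfloor kt/(t+1)\rfloor+t$ and the exact ceiling/floor pattern) undetermined. For this reason I would expect to be able to prove the lower bound in full but the upper bound only up to the constant already obtained by Tuza, with the exact equality remaining conjectural.
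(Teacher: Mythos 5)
There is nothing in the paper to compare your attempt against: this statement is an \emph{open conjecture}, quoted from Tuza \cite{MR811117}, and the paper neither proves it nor claims to --- it only notes what the bound \eqref{main_result} would become if the conjecture were true. Your diagnosis of the situation is therefore exactly right, and your proposal is sound in what it actually claims. The lower-bound half checks out: in your inflation construction $A_{I}\cap B_{J}=(S\smallsetminus I)\cap J$ is empty iff $J\subseteq I$ iff $I=J$ (both being $t$-sets), so it is a genuine $ISP(k,t)$ with $\binom{a+t}{t}$ pairs and $(a+t)+(k-a)\binom{a+t}{t}$ points; your increment formula $\phi(a+1)-\phi(a)=1-\binom{a+t}{t}+(k-a-1)\binom{a+t}{t-1}$ is correct, and with the identity $\left\lceil\frac{k}{t+1}\right\rceil+\left\lfloor\frac{kt}{t+1}\right\rfloor=k$ the substitution $a=\lfloor kt/(t+1)\rfloor$ does reproduce the conjectured expression verbatim. (This is essentially Tuza's own extremal construction, which is why the conjecture has this shape.) Two small points of care: the unimodality argument has a tie at the boundary for $t=1$ and $k$ odd, where $\phi(a+1)-\phi(a)=k-2a-1$ vanishes, so the maximiser is not unique there, though the attained maximum is unchanged; and one should note $k-a=\lceil k/(t+1)\rceil\geq1$ so the appendages $E_{J}$ are nonempty and the $A_{J}$ distinct. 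Your assessment of the upper-bound half is also accurate: the only general tool in play is Bollob\'{a}s's inequality \eqref{Bollobas_inequality1}, which bounds the number of \emph{pairs}, and Tuza's conversion of that into a bound on \emph{points} --- inequality \eqref{Tuza_upperbound_t}, the bound this paper actually uses --- loses a multiplicative constant; the rigidity statement you identify as the crux is precisely what is missing, which is why the statement remains a conjecture. In short: you have proved the ``$\geq$'' direction correctly, correctly refrained from claiming the ``$\leq$'' direction, and the paper contains no proof you could have missed.
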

If this is correct, our bound \eqref{main_result} becomes
\begin{align*}
N(k)\leq& \frac{1}{2}\binom{2k-2}{k-1}+2\binom{2k-4}{k-2}+2k-4\sim\binom{2k-2}{k-1},
\end{align*}
which is asymptotically double the conjectured value.

\section{Proofs}

Recall that, for any finite family $\mathcal{F}$, $\vs{\mathcal{F}}$ is its number of points and $P_{\mathcal{F}}$ is its point set. If $\mathcal{F}$ is uniform, $\uk{\mathcal{F}}$ is its common block size. $\mathcal{F}^{\top}$ is the family of transversals of $\mathcal{F}$ and $\tr{\mathcal{F}}$ is the common size of the transversals. $N(k)$ is the maximum of $\vs{\mathcal{F}}$ over all $MIF(k)$ $\mathcal{F}$. $N^{\top}(k,t)$ is the maximum of $\vs{\mathcal{F}^{\top}}$ over all $\mathcal{F}$ with $\uk{\mathcal{F}}=k$ and $\tr{\mathcal{F}}=t$. Also $n(k,t)$ is the maximum of $\vs{\mathbb{I}}$ over all $ISP(k,t)$ $\mathbb{I}$.
 
\begin{theorem}\label{k^t}
If $\uk{\mathcal{F}}=k$ and $\tr{\mathcal{F}}=t$ then $\#(\mathcal{F}^{\top})\leq k^{t}$.
\end{theorem}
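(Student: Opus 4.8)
The plan is to prove $\#(\mathcal{F}^{\top})\leq k^{t}$ by induction on the transversal size $t=\tr{\mathcal{F}}$, in the spirit of the Erd\H{o}s--Lov\'{a}sz argument bounding the number of blocks of an $MIF(k)$. For the base case $t=1$, observe that $\tr{\mathcal{F}}\geq 1$ forces $\mathcal{F}$ to be nonempty; fixing any block $B$ (of size $k$), every transversal is a singleton $\{x\}$ that must meet $B$, hence $x\in B$, so there are at most $k=k^{1}$ of them.

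For the inductive step ($t\geq 2$) I would fix a block $B=\{z_{1},\dots,z_{k}\}\in\mathcal{F}$. Since each transversal meets $B$, setting $\mathcal{T}_{i}=\{T\in\mathcal{F}^{\top}:z_{i}\in T\}$ gives $\mathcal{F}^{\top}=\bigcup_{i=1}^{k}\mathcal{T}_{i}$, so by the union bound it suffices to show $\#(\mathcal{T}_{i})\leq k^{t-1}$ for each $i$. To this end I delete from $\mathcal{F}$ every block through $z_{i}$, obtaining the $k$-uniform family $\mathcal{F}'=\{B'\in\mathcal{F}:z_{i}\notin B'\}$, and I consider the map $T\mapsto T\smallsetminus\{z_{i}\}$ on $\mathcal{T}_{i}$. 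This map is clearly injective, and since every block of $\mathcal{F}'$ avoids $z_{i}$, the set $T\smallsetminus\{z_{i}\}$ still meets each such block; hence it is a blocking set of $\mathcal{F}'$ of size $t-1$.

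The \emph{crux} is to identify $\tr{\mathcal{F}'}$ exactly as $t-1$, so that these images are genuine transversals of $\mathcal{F}'$ and the induction hypothesis applies to $\mathcal{F}'$. Assuming $\mathcal{T}_{i}\neq\emptyset$ (otherwise $\#(\mathcal{T}_{i})\leq k^{t-1}$ holds trivially), the inequality $\tr{\mathcal{F}'}\leq t-1$ is immediate from the blocking set $T\smallsetminus\{z_{i}\}$ just produced. The reverse inequality $\tr{\mathcal{F}'}\geq t-1$ is the step I expect to be the main obstacle: if $\mathcal{F}'$ had a blocking set $S$ of size strictly less than $t-1$, then $S\cup\{z_{i}\}$ would meet every block of $\mathcal{F}$ --- the blocks avoiding $z_{i}$ via $S$ and the blocks through $z_{i}$ via $z_{i}$ itself --- giving a blocking set of $\mathcal{F}$ of size less than $t$ and contradicting $\tr{\mathcal{F}}=t$. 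One also checks that $\mathcal{F}'$ is nonempty for $t\geq 2$, since otherwise every block would contain $z_{i}$, making $\{z_{i}\}$ a transversal and forcing $t=1$.

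With $\tr{\mathcal{F}'}=t-1$ in hand, each $T\smallsetminus\{z_{i}\}$ is a transversal of $\mathcal{F}'$, i.e.\ lies in $\mathcal{F}'^{\top}$. By injectivity and the induction hypothesis applied to the $k$-uniform family $\mathcal{F}'$ with $\tr{\mathcal{F}'}=t-1$, we get $\#(\mathcal{T}_{i})\leq\#(\mathcal{F}'^{\top})\leq k^{t-1}$. Summing over $1\leq i\leq k$ then yields $\#(\mathcal{F}^{\top})\leq k\cdot k^{t-1}=k^{t}$, completing the induction.
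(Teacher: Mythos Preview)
Your proof is correct. Both your argument and the paper's rest on the same combinatorial core --- fix a block, branch over its $k$ points, and reduce the problem by one --- but they package the induction differently. The paper works entirely inside the single family $\mathcal{F}$ and proves by \emph{backward} induction on $s$ (from $s=t$ down to $s=0$) that any set of $s$ points is contained in at most $k^{t-s}$ transversals; the theorem is the case $s=0$, and the inductive step picks a block disjoint from the current $s$-set (which exists since an $(s{-}1)$-set cannot be a blocking set). You instead induct \emph{forward} on $t$ across all $k$-uniform families, passing at each step to the subfamily $\mathcal{F}'=\{B'\in\mathcal{F}:z_i\notin B'\}$ and verifying $\tr{\mathcal{F}'}=t-1$ so that the hypothesis applies. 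The paper's route yields, at no extra cost, the slightly stronger intermediate statement about transversals through a prescribed set and avoids the side verifications ($\mathcal{F}'\neq\emptyset$, $\tr{\mathcal{F}'}=t-1$); your route is the most direct induction on the parameter in the theorem and is equally valid.
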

\begin{proof}
This is the $s=0$ case of the following.

\noindent{{\sf Claim} :} For $0\leq s\leq t$, any set of $s$ points of $\mathcal{F}$ are together contained in at most $k^{t-s}$ transversals of $\mathcal{F}$.

\vspace{2mm}

\noindent{\tt {Proof of the Claim} :} We prove this claim by backward induction on $s$. It is trivial for $s=t$. So suppose the claim holds for some $s$, with $1\leq s\leq t$. Take any set $A$ of $s-1$ points. Since $\tr{\mathcal{F}}=t>\#(A)$, $A$ is not a blocking set of $\mathcal{F}$. So there is a block $B\in\mathcal{F}$ disjoint from $A$. Therefore each transversal containing $A$ contains at least one of the $k$ sets $A\sqcup\{x\}$, $x\in B$. By induction hypothesis, $A\sqcup\{x\}$ is contained in at most $k^{t-s}$ transversals for each $x\in B$. Therefore $A$ is contained in at most $k.k^{t-s}=k^{t-(s-1)}$ transversals. This completes the induction.
\end{proof}

\begin{corollary}
Let $k,t$ be positive integers. Then up to isomorphism, there are only finitely many families $\mathcal{G}$ with $\uk{\mathcal{G}}=t$ such that $\mathcal{G}$ is isomorphic to $\mathcal{F}^{\top}$ for some uniform family $\mathcal{F}$ with $\uk{\mathcal{F}}=k$. 
\end{corollary}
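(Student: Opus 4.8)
The plan is to leverage Theorem~\ref{k^t} to bound first the number of blocks of $\mathcal{G}$, then its number of points, and finally to observe that there are only finitely many $t$-uniform families on a point set of bounded size.

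First I would fix such a $\mathcal{G}$, say $\mathcal{G}\cong\mathcal{F}^{\top}$ with $\uk{\mathcal{F}}=k$. Since an isomorphism of families preserves block sizes and $\mathcal{F}^{\top}$ is uniform with $\uk{\mathcal{F}^{\top}}=\tr{\mathcal{F}}$, the hypothesis $\uk{\mathcal{G}}=t$ forces $\tr{\mathcal{F}}=t$. Hence Theorem~\ref{k^t} applies with this value of $t$ and yields $\#(\mathcal{G})=\#(\mathcal{F}^{\top})\leq k^{t}$. In other words, every family $\mathcal{G}$ of the type under consideration has at most $k^{t}$ blocks.

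Next, since each of these at most $k^{t}$ blocks is a $t$-set, the point set $P_{\mathcal{G}}$ (the union of the blocks) satisfies $\vs{\mathcal{G}}\leq t\,k^{t}$. After relabelling points we may therefore assume $P_{\mathcal{G}}\subseteq\{1,2,\ldots,t\,k^{t}\}$, so that $\mathcal{G}$ is a collection of $t$-subsets of a fixed $(t\,k^{t})$-element set. There are only $\binom{t\,k^{t}}{t}$ such subsets, hence at most $2^{\binom{t\,k^{t}}{t}}$ possible families on this ground set. This is a finite bound depending only on $k$ and $t$, and every isomorphism class arising in the statement is represented among these families, which proves the corollary.

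The argument is essentially a counting estimate built on top of Theorem~\ref{k^t}, so there is no serious obstacle. The one point that requires care is the observation that $\tr{\mathcal{F}}=t$ is \emph{forced} by $\uk{\mathcal{G}}=t$ (rather than being an additional hypothesis), which is what legitimizes invoking Theorem~\ref{k^t} with the correct value of $t$.
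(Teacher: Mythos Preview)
Your proof is correct and follows essentially the same route as the paper: bound $\#(\mathcal{G})$ by $k^{t}$ via Theorem~\ref{k^t}, deduce $\vs{\mathcal{G}}\leq t\,k^{t}$, and conclude that up to isomorphism all such $\mathcal{G}$ live inside the power set of a fixed $(t\,k^{t})$-element set. Your explicit remark that $\tr{\mathcal{F}}=t$ is forced by $\uk{\mathcal{G}}=t$ is a nice clarification the paper leaves implicit.
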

\begin{proof}
By Theorem~\ref{k^t}, any such $\mathcal{G}$ has at most $k^{t}$ blocks; hence it has at most $t.k^{t}$ points. Therefore up to isomorphism, we may assume that all such families $\mathcal{G}$ are contained in the power set of a fixed set of size $tk^{t}$. So there are only finitely many $\mathcal{G}$'s. 
\end{proof}

This corollary shows that $N(k)$ and $N^{\top}(k,t)$ are both finite.

\begin{construction}\label{bg(k,t)}
Let $2\leq t\leq k-1$ and $S$ be a set of $k+t-2$ symbols. Let $\binom{S}{i}$ denote the family consisting of all $i-$subsets of $S$. Take a new symbol $x_{A}$ (from outside $S$) for each $A\in\binom{S}{k-1}$. Let
\begin{equation*}
\mathcal{F}=\binom{S}{k}\sqcup\left\{\{x_{A}\}\sqcup A: A\in\binom{S}{k-1}\right\}.
\end{equation*}
It is easy to verify that $\tr{\mathcal{F}}=t$ and
\begin{equation*}
\mathcal{F}^{\top}=\binom{S}{t}\sqcup\left\{\{x_{A}\}\sqcup(S\smallsetminus A):A\in\binom{S}{k-1}\right\}.
\end{equation*}
\end{construction}

\begin{theorem}\label{NT(k,t)}
For $2\leq t\leq k-1$, 
\begin{equation*}
k+t-2+\binom{k+t-2}{t-1}\leq N^{\top}(k,t)\leq n(k,t-1).
\end{equation*}
\end{theorem}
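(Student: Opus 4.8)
The plan is to prove the two inequalities separately. The lower bound is witnessed directly by Construction~\ref{bg(k,t)}, whereas the upper bound is obtained by manufacturing, out of an arbitrary family $\mathcal{F}$ realizing $N^{\top}(k,t)$, an $ISP(k,t-1)$ whose point set contains $P_{\mathcal{F}^{\top}}$.

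For the lower bound I would simply invoke the family $\mathcal{F}$ of Construction~\ref{bg(k,t)}, which has $\uk{\mathcal{F}}=k$ and $\tr{\mathcal{F}}=t$. Reading off the description of $\mathcal{F}^{\top}$ recorded there, its point set is $S\sqcup\{x_{A}:A\in\binom{S}{k-1}\}$: every symbol of $S$ occurs already in the transversals coming from $\binom{S}{t}$ (as $|S|=k+t-2\ge t$), and each new symbol $x_{A}$ occurs in the transversal $\{x_{A}\}\sqcup(S\smallsetminus A)$. Hence $\vs{\mathcal{F}^{\top}}=(k+t-2)+\binom{k+t-2}{k-1}=(k+t-2)+\binom{k+t-2}{t-1}$, using $\binom{k+t-2}{k-1}=\binom{k+t-2}{t-1}$, and this forces $N^{\top}(k,t)\ge k+t-2+\binom{k+t-2}{t-1}$. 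This half is routine bookkeeping.

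For the upper bound it suffices to produce, from any uniform $\mathcal{F}$ with $\uk{\mathcal{F}}=k$ and $\tr{\mathcal{F}}=t$, an $ISP(k,t-1)$ $\mathbb{I}$ with $\vs{\mathbb{I}}\ge\vs{\mathcal{F}^{\top}}$; then $\vs{\mathcal{F}^{\top}}\le\vs{\mathbb{I}}\le n(k,t-1)$, and maximizing over $\mathcal{F}$ gives the claim. The construction is the natural one. For each point $p\in P_{\mathcal{F}^{\top}}$ I would choose a transversal $T_{p}\in\mathcal{F}^{\top}$ with $p\in T_{p}$ and, since $T_{p}$ is a minimum and hence inclusion-minimal blocking set, a block $B_{p}\in\mathcal{F}$ with $B_{p}\cap T_{p}=\{p\}$; then I assign to $p$ the pair $(A_{p},C_{p}):=(B_{p},\,T_{p}\smallsetminus\{p\})$, of sizes $k$ and $t-1$. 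Because $p\in B_{p}$, every point of $P_{\mathcal{F}^{\top}}$ lies in the point set of the resulting system, so once it is certified to be an $ISP$ we obtain $\vs{\mathbb{I}}\ge\vs{\mathcal{F}^{\top}}$ for free. The diagonal condition is immediate: $A_{p}\cap C_{p}=(B_{p}\cap T_{p})\smallsetminus\{p\}=\emptyset$.

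The real content is the off-diagonal condition: for $p\ne q$ one needs $A_{p}\cap C_{q}=B_{p}\cap(T_{q}\smallsetminus\{q\})\ne\emptyset$. Since $T_{q}$ is a blocking set it meets $B_{p}$, so $B_{p}\cap T_{q}\ne\emptyset$; and if $q\notin B_{p}$ this intersection already lies inside $T_{q}\smallsetminus\{q\}$ and we are done. Thus the only dangerous configuration is $q\in B_{p}$ together with $B_{p}\cap T_{q}=\{q\}$, i.e.\ $B_{p}$ is itself a witness block for $q$ in $T_{q}$. I expect this to be the main obstacle, and essentially the whole difficulty of the theorem is concentrated here: the pointwise choices of $(T_{p},B_{p})$ cannot be made independently, because such a clash is precisely a coincidence between the block chosen for $p$ and the transversal chosen for $q$. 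The plan is therefore to make the selection globally rather than point by point---for instance by fixing all the transversals first and then choosing, for each $p$, a witness block inside $\{B:B\cap T_{p}=\{p\}\}$ that avoids lying in any $\{B:B\cap T_{q}=\{q\}\}$ with $q\ne p$; or, failing a direct argument, by taking an assignment $p\mapsto(T_{p},B_{p})$ that minimizes the number of clashing ordered pairs and showing that any surviving clash can be removed by a local re-choice (exploiting that a clash exhibits $B_{p}$ as an alternative witness for $q$). Ruling out these clashes---which simultaneously guarantees the pairs are distinct and hence that the system is a genuine $ISP(k,t-1)$---is the crux.
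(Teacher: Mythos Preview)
Your lower bound argument matches the paper's and is complete.

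The upper bound, however, has a genuine gap. You correctly isolate the obstacle: with your pointwise construction $(B_p,\,T_p\smallsetminus\{p\})$, the off-diagonal requirement $B_p\cap(T_q\smallsetminus\{q\})\neq\emptyset$ can fail precisely when $B_p\cap T_q=\{q\}$. But neither of the two repair strategies you sketch is actually carried out, and it is not clear that either can be. For instance, your local-swap idea (``a clash exhibits $B_p$ as an alternative witness for $q$'') suggests re-assigning $B_q:=B_p$; but this leaves $A_p=B_p$ and $C_q=T_q\smallsetminus\{q\}$ unchanged, so $A_p\cap C_q$ is still empty and the clash at $(p,q)$ persists. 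Any genuine fix would have to alter $T_p$, $B_p$, or $T_q$, and you give no argument that such alterations can be made consistently across all points without creating new clashes. As written, the proof stops exactly where you say the crux lies.

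The paper avoids this difficulty by indexing the $ISP$ not by points of $\mathcal{F}^{\top}$ but by blocks of a \emph{minimal} subfamily $\mathcal{E}=\{B_1,\dots,B_n\}\subseteq\mathcal{F}$ with $\tr{\mathcal{E}}=t$. Minimality gives $\tr{\mathcal{E}\smallsetminus\{B_i\}}=t-1$; choosing any transversal $T_i$ of $\mathcal{E}\smallsetminus\{B_i\}$ forces $T_i\cap B_i=\emptyset$ (else $\tr{\mathcal{E}}\leq t-1$), while $T_j\cap B_i\neq\emptyset$ for $j\neq i$ is automatic since $B_i\in\mathcal{E}\smallsetminus\{B_j\}$. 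So $\{(B_i,T_i):1\leq i\leq n\}$ is an $ISP(k,t-1)$ with no clash analysis needed. The containment $P_{\mathcal{F}^{\top}}\subseteq P_{\mathcal{E}}$ then follows because for any $x\in T\in\mathcal{F}^{\top}$ with $x\notin P_{\mathcal{E}}$, the set $T\smallsetminus\{x\}$ would be a blocking set of $\mathcal{E}$ of size $t-1$, contradicting $\tr{\mathcal{E}}=t$. The key idea you are missing is to pass to a transversal-critical subfamily, so that the off-diagonal condition holds by construction rather than by a delicate global selection.
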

\begin{proof}
Construction~\ref{bg(k,t)} yields a $k-$uniform family $\mathcal{F}$  such that $\tr{\mathcal{F}}=t$ and $\mathcal{F}^{\top}$ has $k+t-2+\binom{k+t-2}{t-1}$ points. Hence we get the lower bound.

Let $\mathcal{F}$ be a $k-$uniform family with $\tr{\mathcal{F}}=t$. We need to show that $\vs{\mathcal{F}^{\top}}\leq n(k,t-1)$. Let $$\mathcal{E}=\{B_{i}:1\leq i\leq n\}$$ be a minimal subfamily of $\mathcal{F}$ such that $\tr{\mathcal{E}}=t$. Then, for $1\leq i\leq n$, $\mathcal{E}_{i}:=\mathcal{E}\smallsetminus\{B_{i}\}$ has $\tr{\mathcal{E}_{i}}=t-1$. Choose a transversal $T_{i}$ of $\mathcal{E}_{i}$, where $1\leq i\leq n$. Since $\tr{\mathcal{E}}=t$, it follows that $T_{i}\cap B_{i}=\emptyset$. Thus $$\mathbb{I}=\left\{(B_{i},T_{i}):1\leq i\leq n\right\}$$ is an $ISP(k,t-1)$. Therefore, to complete the proof, it suffices to show that each point $x$ of $\mathcal{F}^{\top}$ is a point of $\mathbb{I}$. Choose a transversal $T$ of $\mathcal{F}$ such that $x\in T$. Then $T$ intersects all the $B_{i}$'s. If $x$ was not a point of $\mathcal{E}$ then $T\smallsetminus\{x\}$ would be a blocking set of $\mathcal{E}$, of size $t-1$, contradicting the choice of $\mathcal{E}$. So $x$ is a point of $\mathcal{E}$ and 
hence of $\mathbb{I}$.
\end{proof}

Since, clearly, $N(k)\leq N^{\top}(k,k)$, Theorem~\ref{NT(k,t)} includes Tuza's upper bound \eqref{N_Tuza} on $N(k)$.

\begin{construction}\label{xmapstoy}
Let $\mathcal{F}$ be a $MIF(k)$ and suppose $\alpha\neq\beta$ are two points of $\mathcal{F}$ such that no block of $\mathcal{F}$ contains $\{\alpha,\beta\}$. Let $\mathcal{G}:=\{B\in\mathcal{F}:\alpha\notin B, \beta\notin B\}$. Put 
\begin{equation*}
\mathcal{F}[\beta\mapsto\alpha]:=\mathcal{G}\sqcup\{T\sqcup\{\alpha\}:T\in\mathcal{G}^{\top}\}.
\end{equation*}
\end{construction}

\begin{theorem}\label{x_mapsto_y}
Let $\alpha,\beta$ be two points of a $MIF(k)$ $\mathcal{F}$ such that no block of $\mathcal{F}$ contains both $\alpha$ and $\beta$. Then the family $\mathcal{F}[\beta\mapsto\alpha]$ (given by \emph{Construction~\ref{xmapstoy}}) is a $MIF(k)$ with point set $P_{\mathcal{F}}\smallsetminus\{\beta\}$.
\end{theorem}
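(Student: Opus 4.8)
The plan is to lean on the three defining features of a $MIF(k)$ $\mathcal{F}$: it is intersecting, it has $\tr{\mathcal{F}}=k$, and $\mathcal{F}=\mathcal{F}^{\top}$ — the last meaning that \emph{every} $k$-element blocking set of $\mathcal{F}$ is already a block. Partition $\mathcal{F}=\mathcal{A}\sqcup\mathcal{B}\sqcup\mathcal{G}$, where $\mathcal{A}=\{B\in\mathcal{F}:\alpha\in B\}$ and $\mathcal{B}=\{B\in\mathcal{F}:\beta\in B\}$; since no block contains both points this is a genuine partition, and no block of $\mathcal{G}$ meets $\{\alpha,\beta\}$. Everything rests on one structural lemma: (i) $\tr{\mathcal{G}}=k-1$, and (ii) for every $A\in\mathcal{A}$ the $(k-1)$-set $A\smallsetminus\{\alpha\}$ is a \emph{minimum} transversal of $\mathcal{G}$, and symmetrically $B\smallsetminus\{\beta\}\in\mathcal{G}^{\top}$ for every $B\in\mathcal{B}$. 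Each $A\smallsetminus\{\alpha\}$ meets every $\alpha$-free block because $\mathcal{F}$ is intersecting, giving $\tr{\mathcal{G}}\leq k-1$; and if $\mathcal{G}$ had a blocking set $T_{0}$ of size $\leq k-2$, then $T_{0}\cup\{\alpha,\beta\}$ would block all of $\mathcal{F}$ in size $\leq k$, contradicting either $\tr{\mathcal{F}}=k$ or (if the size is exactly $k$) the maximality of $\mathcal{F}$, since that $k$-element blocking set would be a block containing both $\alpha$ and $\beta$. I would also record the routine fact that a minimum transversal uses only points of $P_{\mathcal{G}}$, so every $T\in\mathcal{G}^{\top}$ avoids $\alpha$ and $\beta$.

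Granting (i)--(ii), the easy assertions follow at once. Writing $\mathcal{H}=\mathcal{F}[\beta\mapsto\alpha]=\mathcal{G}\sqcup\{T\sqcup\{\alpha\}:T\in\mathcal{G}^{\top}\}$, each new block has size $(k-1)+1=k$, so $\mathcal{H}$ is $k$-uniform; it is intersecting by three trivial checks (two old blocks meet in $\mathcal{F}$, two new blocks share $\alpha$, and an old block $C\in\mathcal{G}$ meets a new block $T\sqcup\{\alpha\}$ because $T$ is a transversal of $\mathcal{G}$). The point set is handled by (ii): since $P_{\mathcal{H}}=P_{\mathcal{G}}\cup\{\alpha\}$, it suffices to place every point $y\neq\alpha,\beta$ of $\mathcal{F}$ into $P_{\mathcal{G}}$, and indeed if $y$ lies in a block of $\mathcal{A}$ (resp. $\mathcal{B}$) then $y\in A\smallsetminus\{\alpha\}\in\mathcal{G}^{\top}\subseteq P_{\mathcal{G}}$ (resp. $y\in B\smallsetminus\{\beta\}$). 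This yields $P_{\mathcal{H}}=P_{\mathcal{F}}\smallsetminus\{\beta\}$.

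The substantive part is that $\mathcal{H}$ is maximal intersecting, which I would reduce to showing $\tr{\mathcal{H}}=k$ together with $\mathcal{H}^{\top}\subseteq\mathcal{H}$ (the reverse inclusion being automatic once $\tr{\mathcal{H}}=k$, as each block is then a $k$-element minimum transversal). Both are driven by a single dichotomy applied to any blocking set $W$ of $\mathcal{H}$ with $\#(W)\leq k$. If $\alpha\in W$, then $W\smallsetminus\{\alpha\}$ still blocks $\mathcal{G}$ (as $\alpha\notin P_{\mathcal{G}}$), forcing $\#(W)-1\geq\tr{\mathcal{G}}=k-1$; hence $\#(W)=k$ and $W\smallsetminus\{\alpha\}\in\mathcal{G}^{\top}$, i.e. $W\in\mathcal{H}$. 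If $\alpha\notin W$, then meeting each new block $T\sqcup\{\alpha\}$ forces $W$ to meet every $T\in\mathcal{G}^{\top}$, whence by (ii) $W$ meets every block of $\mathcal{A}$ and of $\mathcal{B}$; as $W$ already meets $\mathcal{G}$, it blocks all of $\mathcal{F}$, so $\#(W)=k$ and, by the maximality of $\mathcal{F}$, $W\in\mathcal{F}$. Since $\alpha\notin W$, and $\beta\notin W$ (otherwise $W\smallsetminus\{\beta\}$ would be a blocking set of $\mathcal{H}$ of size $k-1$, impossible once $\tr{\mathcal{H}}=k$ is known), we get $W\in\mathcal{G}\subseteq\mathcal{H}$. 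Running this dichotomy with $\#(W)\leq k-1$ produces a contradiction in both branches, establishing $\tr{\mathcal{H}}=k$; running it with $\#(W)=k$ gives $\mathcal{H}^{\top}\subseteq\mathcal{H}$, and assembling the pieces shows $\mathcal{H}=\mathcal{H}^{\top}$ is a $MIF(k)$.

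I expect the main obstacle to be the $\alpha\notin W$ branch: one must upgrade ``$W$ meets every member of $\mathcal{G}^{\top}$'' to ``$W$ blocks every block of $\mathcal{F}$,'' which is precisely where the shadow-transversal lemma (ii) and the maximality of $\mathcal{F}$ are indispensable, and where the construction's asymmetry in $\alpha$ and $\beta$ must nevertheless be matched by a symmetric use of (ii) for $\mathcal{A}$ and $\mathcal{B}$. The lower bounds $\tr{\mathcal{G}}\geq k-1$ and $\tr{\mathcal{H}}\geq k$ are the other delicate points, since both hinge on the observation that any too-small blocking set, when enlarged by $\alpha$ and/or $\beta$, manufactures either a sub-$k$ blocking set of $\mathcal{F}$ or a forbidden block of $\mathcal{F}$ containing both $\alpha$ and $\beta$.
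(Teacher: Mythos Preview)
Your proof is correct and follows essentially the same route as the paper's: both hinge on showing $\tr{\mathcal{G}}=k-1$ via the $T_{0}\cup\{\alpha,\beta\}$ trick, identifying $A\smallsetminus\{\alpha\}$ and $B\smallsetminus\{\beta\}$ as members of $\mathcal{G}^{\top}$, and then arguing that any $\alpha$-free blocking set of $\mathcal{H}$ already blocks all of $\mathcal{F}$. The only cosmetic difference is that the paper isolates an intermediate claim (every $T\in\mathcal{G}^{\top}$ has a disjoint partner in $\mathcal{G}^{\top}$) to handle $\tr{\mathcal{H}}\geq k$, whereas your unified dichotomy on $\alpha\in W$ versus $\alpha\notin W$ absorbs that step directly.
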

\begin{proof}
Let $\mathcal{G}$ be as in Construction~\ref{xmapstoy}. If $T$ is transversal of $\mathcal{G}$ with $\#(T)\leq k-2$, then $T\sqcup\{\alpha,\beta\}$ is a blocking set of $\mathcal{F}$ of size at most $k$. Since $\mathcal{F}$ is a $MIF(k)$, it follows that $T\sqcup\{\alpha,\beta\}$ is a block of $\mathcal{F}$. This is a contradiction since no block of $\mathcal{F}$ contains both $\alpha$ and $\beta$. Thus $\tr{\mathcal{G}}\geq k-1$. Since, for $\beta\in B\in\mathcal{F}$, $B\smallsetminus\{\beta\}$ is a blocking set of $\mathcal{G}$, it follows that $\tr{\mathcal{G}}=k-1$. Thus $\widehat{\mathcal{F}}:=\mathcal{F}[\beta\mapsto\alpha]$ is uniform with $\uk{\widehat{\mathcal{F}}}=k$. This argument also shows that if $\beta\notin B\in\mathcal{F}$, then $B$ is a block of $\widehat{\mathcal{F}}$. Also if $\beta\in B\in\mathcal{F}$, then $\{\alpha\}\sqcup(B\smallsetminus\{\beta\})$ is a block of $\widehat{\mathcal{F}}$. We have the following.

\noindent\textsf{Claim  :} For each $T\in\mathcal{G}^{\top}$ there exists $T^{'}\in\mathcal{G}^{\top}$ such that $T\cap T^{'}=\emptyset$.
\begin{proof}[\tt {Proof of the Claim} :]\renewcommand{\qedsymbol}{}
Suppose the claim is false. Then there exists $T\in\mathcal{G}^{\top}$ such that $T$ is a blocking set of $\mathcal{G}^{\top}$. So $T$ is a blocking set of $\mathcal{G}\sqcup\mathcal{G}^{\top}$, and hence of $\mathcal{F}$. This means $\tr{\mathcal{F}}\leq \#(T)=k-1$. Contradiction.
\end{proof}
Let $C$ be a blocking set of $\widehat{\mathcal{F}}$. Then in particular it is a blocking set of $\mathcal{G}$. Since $\tr{\mathcal{G}}=k-1$ it follows that $\#(C)\geq k-1$. If $\#(C)=k-1$ then $C\in\mathcal{G}^{\top}$, so that $\alpha\notin C$. By the above claim there exists a $T\in\mathcal{G}^{\top}$ such that $T\cap C=\emptyset$. Hence $C$ is disjoint from $T\sqcup\{\alpha\}\in\widehat{\mathcal{F}}$. Contradiction. Hence $\#(C)\geq k$. Therefore $\tr{\widehat{\mathcal{F}}}=k$. Since $\mathcal{F}$ is an intersecting family, the construction of $\widehat{\mathcal{F}}$ shows that $\widehat{\mathcal{F}}$ is an intersecting family. Consequently $\widehat{\mathcal{F}}\subseteqq(\widehat{\mathcal{F}})^{\top}$. If $T$ is a transversal of $\widehat{\mathcal{F}}$ and $\alpha\in T$, then $T\smallsetminus\{\alpha\}$ is a transversal of $\mathcal{G}$, so that $T=(T\smallsetminus\{\alpha\})\sqcup\{\alpha\}\in\widehat{\mathcal{F}}$. If $T$ is a transversal of $\widehat{\mathcal{F}}$ and $\alpha\notin T$ then (as all 
the blocks of $\mathcal{F}$ with $\beta\notin B$ are blocks of $\widehat{\mathcal{F}}$ and for $\beta\in B\in\mathcal{F}$, $(B\smallsetminus\{\beta\})\sqcup\{\alpha\}$ is a block of $\widehat{\mathcal{F}}$) $T$ is a transversal of $\mathcal{F}$. Hence $T\in\mathcal{F}$ and $\beta,\alpha\notin T$, so that $T\in\mathcal{G}\subseteq\widehat{\mathcal{F}}$. Thus $(\widehat{\mathcal{F}})^{\top}\subseteqq\widehat{\mathcal{F}}$, so that $\widehat{\mathcal{F}}$ is a $MIF(k)$.

Clearly the point set of $\widehat{\mathcal{F}}$ is contained in $P_{\mathcal{F}}\smallsetminus\{\beta\}$. Take any $\gamma\in P_{\mathcal{F}}\smallsetminus\{\beta\}$. Take a block $B$ of $\mathcal{F}$ such that $\gamma\in B$. If $\beta\notin B$ then we have $\gamma\in B\in\widehat{\mathcal{F}}$ and hence $\gamma$ is a point of $\widehat{\mathcal{F}}$. If $\beta\in B$, then \--- as $\#(B)=k=\tr{\mathcal{F}}$, there is a block $B^{'}$ of $\mathcal{F}$ such that $B\cap B^{'}=\{\gamma\}$. Then $\gamma\in B^{'}\in\widehat{\mathcal{F}}$, hence again $\gamma$ is a point of $\widehat{\mathcal{F}}$. Thus the point set of $\widehat{\mathcal{F}}$ is $P_{\mathcal{F}}\smallsetminus\{\beta\}$.
\end{proof}

\begin{theorem}\label{N(k)}
For $k\geq 2$, 
\begin{equation*}
N(k)\leq\frac{1}{2}\binom{2k-2}{k-1}+n(k,k-2).
\end{equation*}
\end{theorem}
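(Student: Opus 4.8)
The plan is to fix a $MIF(k)$ $\mathcal{F}$ with $\vs{\mathcal{F}}=N(k)$ and to split its point set $P_{\mathcal{F}}$ by means of the \emph{co-occurrence graph} $G$ on $P_{\mathcal{F}}$, in which two points are adjacent precisely when some block of $\mathcal{F}$ contains both. I would choose a maximal independent set $I=\{\gamma_{1},\dots,\gamma_{m}\}$ of $G$; thus the points of $I$ are pairwise not co-occurring, while \emph{every} point of $P_{\mathcal{F}}\smallsetminus I$ co-occurs with at least one $\gamma_{i}$. The goal is then to prove the two estimates $\#(I)\leq\tfrac{1}{2}\binom{2k-2}{k-1}$ and $\#(P_{\mathcal{F}}\smallsetminus I)\leq n(k,k-2)$ separately and add them.

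For the bound on $\#(I)$ I would invoke Bollob\'{a}s' inequality~\eqref{Bollobas_inequality2} through a \emph{symmetric} set-pair system, which is what produces the factor $\tfrac12$. Since $\mathcal{F}$ is a $MIF(k)$ it is $1$-critical, so for each $\gamma\in I$ there are blocks $B_{\gamma}^{1},B_{\gamma}^{2}\ni\gamma$ with $B_{\gamma}^{1}\cap B_{\gamma}^{2}=\{\gamma\}$; put $A_{\gamma}=B_{\gamma}^{1}\smallsetminus\{\gamma\}$ and $D_{\gamma}=B_{\gamma}^{2}\smallsetminus\{\gamma\}$, two disjoint $(k-1)$-sets. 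I claim the doubled collection $\{(A_{\gamma},D_{\gamma}):\gamma\in I\}\cup\{(D_{\gamma},A_{\gamma}):\gamma\in I\}$ is an $ISP(k-1,k-1)$. Within-pair disjointness is clear; for two distinct members either they share the same $\gamma$ in opposite orientations, when the relevant cross intersection is $A_{\gamma}$ or $D_{\gamma}$ (nonempty as $k\geq2$), or they arise from $\gamma\neq\gamma'$, in which case, because $\gamma,\gamma'$ do not co-occur, $\gamma'\notin B_{\gamma}^{i}$ and $\gamma\notin B_{\gamma'}^{j}$, so the nonempty (by the intersecting property) set $B_{\gamma}^{i}\cap B_{\gamma'}^{j}$ already avoids $\{\gamma,\gamma'\}$ and all four cross intersections survive. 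Hence \eqref{Bollobas_inequality2} yields $2\#(I)\leq\binom{2k-2}{k-1}$, the first estimate.

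For the bound on $P_{\mathcal{F}}\smallsetminus I$ I would use Construction~\ref{xmapstoy} to concentrate all of these points onto a single point. Starting from $\mathcal{F}$ I successively form $\mathcal{F}[\gamma_{2}\mapsto\gamma_{1}],\,(\mathcal{F}[\gamma_{2}\mapsto\gamma_{1}])[\gamma_{3}\mapsto\gamma_{1}],\dots$, folding $\gamma_{2},\dots,\gamma_{m}$ into $\gamma_{1}$. Each step is legitimate by Theorem~\ref{x_mapsto_y}: the invariant, proved by induction, is that in the current family the blocks through $\gamma_{1}$ are exactly the images of the original blocks through $\gamma_{1},\dots,\gamma_{j}$, and hence, by independence of $I$, never meet $\gamma_{j+1},\dots,\gamma_{m}$, so $\gamma_{1}$ and the next point to be folded still do not co-occur. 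By Theorem~\ref{x_mapsto_y} the end product $\mathcal{F}^{*}$ is a $MIF(k)$ with point set $(P_{\mathcal{F}}\smallsetminus I)\cup\{\gamma_{1}\}$. I would then check that in $\mathcal{F}^{*}$ the point $\gamma_{1}$ co-occurs with \emph{every} other point: a point $\delta\in P_{\mathcal{F}}\smallsetminus I$ co-occurs in $\mathcal{F}$ with some $\gamma_{i}$ (maximality of $I$), and the witnessing block is carried by the folding to a block of $\mathcal{F}^{*}$ through both $\gamma_{1}$ and $\delta$. Thus $P_{\mathcal{F}}\smallsetminus I$ is exactly the set of points co-occurring with $\gamma_{1}$ in $\mathcal{F}^{*}$. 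Writing $\mathcal{G}=\{B\in\mathcal{F}^{*}:\gamma_{1}\notin B\}$, one has $\tr{\mathcal{G}}=k-1$ (a smaller transversal would, on adjoining $\gamma_{1}$, block $\mathcal{F}^{*}$), every block through $\gamma_{1}$ with $\gamma_{1}$ deleted lies in $\mathcal{G}^{\top}$, and therefore $\#(P_{\mathcal{F}}\smallsetminus I)\leq\vs{\mathcal{G}^{\top}}\leq N^{\top}(k,k-1)\leq n(k,k-2)$ by Theorem~\ref{NT(k,t)}.

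The main obstacle I anticipate is the bookkeeping in the folding step: one must verify, through a chain of applications of Construction~\ref{xmapstoy} in which the family keeps changing, both that every fold remains admissible and that at the end $\gamma_{1}$ has absorbed the co-occurrences of the entire complement of $I$. Both facts hinge on the independence and the maximality of $I$ together with the explicit description of the blocks through the target point given in the proof of Theorem~\ref{x_mapsto_y}. The degenerate cases $m=1$ and $k=2$, where Theorem~\ref{NT(k,t)} is vacuous, I would dispose of directly.
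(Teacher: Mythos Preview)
Your ISP argument bounding $\#(I)\le\tfrac12\binom{2k-2}{k-1}$ is correct and is in fact a little cleaner than the paper's version, since it takes place entirely in the original family $\mathcal{F}$. The difficulty is in the folding half. The invariant you propose --- that after folding $\gamma_{2},\dots,\gamma_{j}$ into $\gamma_{1}$ the blocks through $\gamma_{1}$ are \emph{exactly} the images of the original blocks through $\gamma_{1},\dots,\gamma_{j}$ --- is false. By Construction~\ref{xmapstoy}, the blocks of $\mathcal{F}[\beta\mapsto\alpha]$ through $\alpha$ are the sets $T\sqcup\{\alpha\}$ for \emph{every} $T\in\mathcal{G}^{\top}$, not only for those $T$ of the form $B\smallsetminus\{\alpha\}$ or $B\smallsetminus\{\beta\}$ with $B\in\mathcal{F}$. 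Unwinding the iteration, the relevant $\mathcal{G}$ at step $j$ is simply $\{B\in\mathcal{F}:B\cap\{\gamma_{1},\dots,\gamma_{j}\}=\emptyset\}$, and a $(k-1)$--transversal $T$ of this family may well contain $\gamma_{j+1}$: this happens precisely when the subfamily of $\mathcal{F}$ avoiding $\{\gamma_{1},\dots,\gamma_{j+1}\}$ admits a $(k-2)$--blocking set, which nothing in your hypotheses rules out. In that case $T\sqcup\{\gamma_{1}\}$ is a block of the current family containing both $\gamma_{1}$ and $\gamma_{j+1}$, and the next fold is \emph{not} admissible, so the chain breaks and the bound on $\#(P_{\mathcal{F}}\smallsetminus I)$ does not follow.

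The paper avoids this by reversing the order of the two halves. It fixes $\alpha$ and folds greedily, at each stage choosing \emph{any} point $\beta_{n}$ of the current family $\mathcal{F}_{n}$ not co-occurring with $\alpha$ there, so admissibility is automatic. The price is that the folded points $\beta_{0},\dots,\beta_{N-1}$ need not be independent in the original $\mathcal{F}$; accordingly the $ISP(k-1,k-1)$ witnessing $N\le\tfrac12\binom{2k-2}{k-1}$ is assembled from pairs $B_{n},B'_{n}\in\mathcal{F}_{n}$ taken in the \emph{successive} families, and the required cross--intersections are obtained by first proving (a one--line induction) that each earlier $T_{m}\sqcup\{\alpha\}$ and $T'_{m}\sqcup\{\alpha\}$ survives as a block of every later $\mathcal{F}_{n}$.
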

\begin{proof}
Let $\mathcal{F}$ be a $MIF(k)$. We need to show that 
$\vs{\mathcal{F}}\leq\frac{1}{2}\binom{2k-2}{k-1}+n(k,k-2)$.
Fix a point $\alpha$ of $\mathcal{F}$. We inductively define two finite sequences: a sequence $\{\beta_{n}:0\leq n\leq N-1\}$ of distinct points of $\mathcal{F}$ and a sequence $\{\mathcal{F}_{n}:1\leq n\leq N\}$ of $MIF(k)$'s. Define $\beta_{0}=\alpha$, $\mathcal{F}_{1}=\mathcal{F}$. Suppose we have already defined $\beta_{m}$ for $0\leq m\leq n-1$, and $\mathcal{F}_{m}$ for $1\leq m\leq n$. If for each point $\beta$ of $\mathcal{F}_{n}$ there is a block of $\mathcal{F}_{n}$ containing both $\alpha$ and $\beta$, then put $n=N$ and terminate the construction.  Otherwise, choose a point $\beta_{n}$ of $\mathcal{F}_{n}$ such that no block of $\mathcal{F}_{n}$ contains both $\alpha$ and $\beta_{n}$ and construct $\mathcal{F}_{n+1}:=\mathcal{F}_{n}[\beta_{n}\mapsto\alpha]$. By construction and Theorem~\ref{x_mapsto_y}, for $n\geq1$ each $\mathcal{F}_{n+1}$ is a $MIF(k)$ with $P_{\mathcal{F}_{n+1}}=P_{\mathcal{F}_{n}}\smallsetminus\{\beta_{n}\}$. 

Notice that this construction must end in finitely many steps, since by Theorem~\ref{k^t}, $\mathcal{F}_{1}=\mathcal{F}$ is finite. Since induction has terminated at the $N-$th step, $\mathcal{F}_{N}$ has the property that for each point $\beta$ of $\mathcal{F}_{N}$ there is a block of $\mathcal{F}_{N}$ containing both $\alpha$ and $\beta$. Put $$\mathcal{G}=\left\{B\in\mathcal{F}_{N}:\alpha\notin B\right\}.$$ 
For $\alpha\in B\in\mathcal{F}_{N}$, $B\smallsetminus\{\alpha\}$ is a blocking set of $\mathcal{G}$ of size $k-1$. So $\tr{\mathcal{G}}\leq k-1$. If $T$ is a transversal of $\mathcal{G}$ with $\#(T)\leq k-1$, then $T\sqcup\{\alpha\}$ is a blocking set of $\mathcal{F}_{N}$ with size at most $k$. Since $\mathcal{F}_{N}$ is a $MIF(k)$, it follows that $T\sqcup\{\alpha\}$ is a block of $\mathcal{F}_{N}$. Thus $\tr{\mathcal{G}}=k-1$ and $\mathcal{G}^{\top}=\left\{B\smallsetminus\{\alpha\}:\alpha\in B\in\mathcal{F}_{N}\right\}$.
Thus $P_{\mathcal{G}}=P_{\mathcal{G}^{\top}}=P_{\mathcal{F}}\smallsetminus\{\beta_{n}:0\leq n\leq N-1\}$. Therefore, by Theorem~\ref{NT(k,t)}, 
\begin{equation}\label{N(k)_1}
\vs{\mathcal{F}}=N+\vs{\mathcal{G}^{\top}}\leq N+N^{\top}(k,k-1)\leq N+n(k,k-2).
\end{equation}
Choose two blocks $B_{0}$, $B^{'}_{0}$ of $\mathcal{F}=\mathcal{F}_{1}$ such that $B_{0}\cap B^{'}_{0}=\{\beta_{0}\}$. Also, for $1\leq n\leq N-1$, choose two blocks $B_{n}$, $B^{'}_{n}$ of $\mathcal{F}_{n}$ such that $B_{n}\cap B^{'}_{n}=\{\beta_{n}\}$. (As already remarked, any point of a $MIF(k)$ lies in such a pair of blocks.) Put $T_{n}=B_{n}\smallsetminus\{\beta_{n}\}$, $T^{'}_{n}=B^{'}_{n}\smallsetminus\{\beta_{n}\}$. Thus $T_{n}\cap T^{'}_{n}=\emptyset$ for $0\leq n\leq N-1$. 

\noindent\textsf{Claim  :} For $0\leq m<n\leq N-1$, $T_{m}\sqcup\{\alpha\}$ and $T^{'}_{m}\sqcup\{\alpha\}$ are blocks of $\mathcal{F}_{n}$.
\begin{proof}[\tt {Proof of the Claim} :]\renewcommand{\qedsymbol}{}
This claim may be proved by finite induction on $n$.\\
If $n=m+1$, then $\mathcal{F}_{n}=\mathcal{F}_{m}[\beta_{m}\mapsto\alpha]$ and $T_{m}\sqcup\{\beta_{m}\}$, $T^{'}_{m}\sqcup\{\beta_{m}\}\in\mathcal{F}_{m}$ implies $T_{m}\sqcup\{\alpha\}$, $T^{'}_{m}\sqcup\{\alpha\}\in\mathcal{F}_{m+1}=\mathcal{F}_{n}$. If $m<n\leq N-1$, and the claim is correct for this value of $n$, then $T_{m}\sqcup\{\alpha\}$, $T^{'}_{m}\sqcup\{\alpha\}\in\mathcal{F}_{n}$ and $\mathcal{F}_{n+1}=\mathcal{F}_{n}[\beta_{n}\mapsto\alpha]$ implies $T_{m}\sqcup\{\alpha\}$, $T^{'}_{m}\sqcup\{\alpha\}\in\mathcal{F}_{n+1}$.
\end{proof}

Now for $0\leq m<n\leq N-1$, $T_{m}\sqcup\{\alpha\}$, $T^{'}_{m}\sqcup\{\alpha\}$, $T_{n}\sqcup\{\beta_{n}\}$ and $T^{'}_{n}\sqcup\{\beta_{n}\}$ are blocks of the intersecting family $\mathcal{F}_{n}$. Therefore these four sets intersect pairwise. Since $\beta_{n}\neq\alpha$ and $\alpha$ \& $\beta_{n}$ are never together in a block of $\mathcal{F}_{n}$, it follows that $T_{m}\cap T_{n}\neq\emptyset$, $T^{'}_{m}\cap T_{n}\neq\emptyset$, $T_{m}\cap T^{'}_{n}\neq\emptyset$ and $T^{'}_{m}\cap T^{'}_{n}\neq\emptyset$ for $0\leq m<n\leq N-1$. Therefore,
\begin{equation*}
\mathbb{I}:=\left\{(T_{n},T^{'}_{n}):0\leq n\leq N-1\right\}\sqcup\left\{(T^{'}_{n},T_{n}):0\leq n\leq N-1\right\}
\end{equation*}
is an $ISP(k-1,k-1)$ containing $2N$ pairs. Therefore by Bollob\'{a}s inequality~\eqref{Bollobas_inequality2}, we get 
\begin{equation}\label{N(k)_2}
N\leq\frac{1}{2}\binom{2k-2}{k-1}.
\end{equation}
From \eqref{N(k)_1} and \eqref{N(k)_2}, we conclude that $\vs{\mathcal{F}}\leq\frac{1}{2}\binom{2k-2}{k-1}+n(k,k-2)$.
\end{proof}

\begin{acknowledgement}
We thank Professor Bhaskar Bagchi for suggesting the problem and for his help in the preparation of this paper. 
 
\end{acknowledgement}

\end{document}